\newtheorem{proposition}{Proposition}
\newtheorem{lemma}{Lemma}
\newcommand{\bzero}{\mathbf{0}}
\newcommand{\mat}[2]{\mathcal{M}_{#1,#2}(\R)}
\newcommand{\iset}{\llbracket 1, N \rrbracket}
\newcommand{\iiniset}{\forall i \in \iset}
\newcommand{\Sset}{\mathcal{S}}
\newcommand{\SprojX}{\Pi_{\Xset}(\Sset)}
\newcommand{\prob}{\mathbb{P}}
\newcommand{\ba}{\mathbf{a}}
\newcommand{\bB}{\mathbf{B}}
\newcommand{\bC}{\mathbf{C}}
\newcommand{\balpha}{\boldsymbol{\alpha}}
\newcommand{\bbeta}{\boldsymbol{\beta}}
\newcommand{\bx}{\mathbf{x}}
\newcommand{\by}{\mathbf{y}}
\newcommand{\bz}{\mathbf{z}}
\newcommand{\R}{\mathbb{R}}
\newcommand{\sgn}{\textrm{sgn}}
\newcommand{\defspace}[1][]{\R^{d\ifx&#1&\else_{#1}\fi}}
\newcommand{\Xset}{\mathcal{X}}
\newcommand{\Yset}{\mathcal{Y}}
\newcommand{\XYset}{\Xset \times \Yset}
\def\projalpha{-0.6}
\def\projbeta{-0.4}
\newcommand{\pt}[4]{%
  \pgfmathsetmacro{\px}{#2 + \projalpha*(#4)}%
  \pgfmathsetmacro{\py}{#3 + \projbeta*(#4)}%
  \coordinate (#1) at (\px,\py);
}
\begin{document}

\title{A methodology for creating multidisciplinary design optimization benchmark problems from optimization ones}

\author[1]{Matthias De Lozzo}
\author[2]{Olivier Roustant}
\author[ ]{Amine Aziz-Alaoui}
\affil[1]{Institut de Recherche Technologique Saint Exupéry, France}
\affil[2]{UMR CNRS 5219, Institut de Mathématiques de Toulouse, INSA, Université de Toulouse, France}

\maketitle

\begin{abstract}

Benchmark problems with known solutions play a central role in the assessment of optimization algorithms. While mono-disciplinary optimization benefits from a rich collection of such problems, multidisciplinary design optimization (MDO) lacks equivalent resources: existing MDO benchmarks are scarce, rarely scalable, and their solutions are generally not known theoretically. In this paper, we propose a systematic methodology to transform any mono-disciplinary optimization problem with a known solution into a family of parametric MDO problems sharing that same solution. The construction relies on two key ingredients: a set of coupling equations that introduce interdependencies between disciplines, and a link function that eliminates the coupling variables and recovers the original mono-disciplinary problem. Theoretical conditions guaranteeing the equivalence between the two problems are established. The methodology is agnostic to the number of disciplines and variable dimensions, making it naturally suited for scalability studies. As an illustration, we construct a family of scalable MDO Rosenbrock problems and use them to benchmark two MDO coupling algorithms, namely the Jacobi and Gauss-Seidel schemes, across varying problem sizes. The proposed framework opens a systematic route to generating MDO benchmarks of arbitrary scale and complexity from the extensive catalog of existing mono-disciplinary test problems.


\paragraph{Keywords} Optimization -- Multidisciplinary design optimization -- Benchmark problems -- Multidisciplinary feasible formulation.

\end{abstract}

\section{Introduction}\label{sec:introduction}

Multidisciplinary design optimization (MDO) is a branch of optimization focusing on multidisciplinary problems~\cite{martins2021engineering}. Objective and constraint functions involve discipline-specific functions, and the difficulty lies in the fact that the latter are coupled: some of their inputs, called coupling variables, are outputs of other disciplines. This interdependence is captured through so-called coupling equations. Given the intense research activity devoted to designing efficient algorithms for solving MDO problems, and the need to compare their performance, there is a clear motivation for defining benchmark problems whose solutions are known theoretically.

The use of such benchmark problems is common practice in mono-disciplinary optimization. The literature is so abundant that several benchmarking platforms have been built around catalogs of optimization problems. The Comparing Continuous Optimizers platform (COCO) automates benchmarking experiments for black-box optimization algorithms by providing dozens of problems of various types, including unconstrained and constrained, mono-objective and bi-objective, continuous or mixed-integer, and noisy or noise-free \cite{Hansen02012021}. Similarly, CUTEst is a constrained and unconstrained testing environment for mathematical optimization, offering a collection of over 1000 problems \cite{gould2015cutest}.

By contrast, benchmark problems for multidisciplinary optimization remain scarce. Moreover, their solutions are rarely known theoretically and must instead be approximated numerically. Some of these problems are purely mathematical, with no physical interpretation, e.g., the Sellar problem \cite{sellar96}. Others rely on simple physical expressions, e.g., the combustion of propane problem \cite{tedfordmartins} and the supersonic business jet design problem \cite{sobieski-ssbj}, or address specific industrial applications \cite{abu2026establishing,volle2025aero,gray2024proposed}. Most of these problems have a fixed number of disciplines and fixed variable dimensions, which makes it difficult to assess whether conclusions drawn from a benchmark study remain robust at scale. Only a few scalable benchmark MDO problems address this limitation, either in a purely mathematical setting \cite{tedfordmartins,aziz2024scalable} or through a data-driven approach \cite{chauhan,vanaret2017}.

In this paper, we aim to bridge the gap between benchmark mono-disciplinary optimization problems and MDO. A first step in this direction was taken in \cite{johnson2023}, which extended the scalable ZDT bi-objective benchmark problem~\cite{ZDT} to the MDO setting. However, that approach is restricted to linear coupling functions and is tailored to a single specific optimization problem. In contrast, our work transforms any mono-disciplinary optimization problem with a known solution into a collection of parametric MDO problems sharing the same solution. This enables the construction of scalable MDO problems for which a solution is theoretically guaranteed. Our methodology relies on two ingredients: a set of coupling equations and a so-called link function. The link function eliminates the coupling variables, thereby reducing the MDO problem to a mono-disciplinary optimization problem. We provide concrete examples of both coupling equations and link functions to guide the user. The construction is underpinned by theoretical results that establish conditions ensuring the equivalence between the solution of the original mono-disciplinary problem and that of the resulting MDO problem. We illustrate the methodology by constructing scalable MDO Rosenbrock problems from their mono-disciplinary counterpart and use them to assess the performance of two MDO algorithms.

The paper is organized as follows. Section~\ref{sec:principle} introduces the background and notation. Section~\ref{sec:methodology} presents the methodology: Section~\ref{subsec:generic} describes the generic transformation of an optimization problem into an MDO problem, while Section~\ref{subsec:linear} develops a specialized version for linear coupling functions. Section~\ref{sec:experiments} applies the methodology to the multidimensional Rosenbrock problem to compare the Jacobi and Gauss-Seidel coupling algorithms across varying numbers of disciplines and variable sizes. Finally, Section~\ref{sec:conclusion} summarizes the methodology.

\section{Background and notations}\label{sec:principle}

In this work,
the optimization problems of interest take the form
\begin{equation}\label{eq:optpbm}    
\begin{aligned}
\min_{\bx\in\Xset}\quad & f(\bx) \\
\textrm{s.t.} \quad & g_0(\bx) \leq 0 \\
& g_i(\bx_0, \bx_i) \leq 0, \quad \iiniset. \\
\end{aligned}
\end{equation}
$\bx=(\bx_0, \bx_1, \ldots, \bx_N)$ are the optimization variables, 
$\Xset=\prod_{i=0}^N\Xset_i \subset\R^{d}$ is the search space with $\Xset_i\subset\R^{d_i}$,
$f: \defspace \to \R$ is the objective function, 
and $g_0:\defspace \to \R^{q_0}$ and $g_i:\defspace[0]\times\defspace[i] \to \R^{q_i}$ are the constraint functions.
This framework is sufficiently general 
to include a wide variety of optimization problems,
whether it is minimization or maximization, constrained or unconstrained problems,
with positivity or negativity constraints and arbitrary constraint threshold values.
Such problems can be rewritten as \eqref{eq:optpbm},
up to linear transformations and removal of constraints.
Note that the domain where $f, g_0, g_i$ are defined is the whole $\R^k$ (where $k=d, d_0, d_i$ respectively), a technical requirement that will be useful in the methodology. However, the optimization domain can be any subset of the definition domain.

Similarly,
the MDO problems of interest take the form
\begin{equation}\label{eq:mdopbm}    
\begin{aligned}
\min_{(\bx, \by)\in\XYset}\quad & \tilde{f}(\bx, \by) \\
\textrm{s.t.} \quad & \tilde{g}_0\left(\bx, \by \right)\leq 0 \\
& \tilde{g}_i\left(\bx_0, \bx_i,\by_{i}\right)\leq 0,\quad \iiniset \\
& \by_i = h_i(\bx_0, \bx_i, \by_{-i}), \quad \iiniset.
\end{aligned}
\end{equation}
$\by = (\by_1, \ldots, \by_N) \in \Yset \subset\R^p$ are the coupling variables, 
where $\by_i\in\Yset_i\subset\R^{p_i}$,
and $h_i: \Xset_0 \times \Xset_i \times \Yset_{-i} \to \Yset_i \subset\R^{p_i}$ is the $i$-th coupling function,
with $\by_{-i}=(\by_j)_{j\in \iset \setminus\{i\}}$.
A strongly coupled discipline is characterized by the specific features $\bx_i$, $\tilde{g}_i$ and $h_i$
and all the strongly coupled disciplines depend on the global optimization variables $\bx_0$.
In Problem \eqref{eq:mdopbm},
the coupling equation system $\{\by_i = h_i(\bx_0, \bx_i, \by_{-i}), \quad \iiniset\}$ models the interactions between $N\geq 2$ disciplines, which also imposes $d \geq 3$.
Lastly,
the quantities $\tilde{f}(\bx, \by)$ and $\tilde{g}_0\left(\bx, \by \right)$ are computed by 
either some of these strongly coupled disciplines or complementary disciplines,
which are coupled or not with the latter.

Note that Problem \eqref{eq:mdopbm} is sufficiently generic to be rewritten according to the MDF and IDF architectures~\cite{martinsLambe}, which are the most popular MDO formulations. It can also be broken down into sub-problems, using bi-level formulations~\cite{gazaix2019}.

\section{Methodology to make an optimization problem multidisciplinary}\label{sec:methodology}

This section presents a methodology to rewrite the optimization problem \eqref{eq:optpbm} as the MDO problem \eqref{eq:mdopbm}.
This technique has its origins in the PhD thesis of one of the authors \cite{azizalaoui:tel-05059696}.
The idea is to consider a set of coupling functions $\{h_1, \dots, h_N \}$
and to define a \textit{link} function $L:\XYset \to \R^d$
such that $L(\bx,\by) = \bx$ for all solutions $(\bx,\by)$ of the coupling equations $\{\by_i = h_i(\bx_0, \bx_i, \by_{-i}), \quad \iiniset\}$. Indeed, we can see that $L$ eliminates the coupling variables $\by$ and thus allows to go from an MDO problem to an optimization one.
The connection between Problems \eqref{eq:optpbm} and \eqref{eq:mdopbm} is then made by composing the objective and constraint functions $f,g_0,\ldots,g_N$ with $L$.

\subsection{A generic transformation}\label{subsec:generic}

We start from the framework of Problem \eqref{eq:optpbm}.
We further consider extra functions $h_i: \Xset_0 \times \Xset_i \times \Yset_{-i} \to \Yset_i$  ($\iiniset$), corresponding to the coupling functions of the MDO problem that we wish to create. For simplicity, denote 
$g: \bx \in \defspace \mapsto (g_0(\bx), g_1(\bx_0, \bx_1), \dots, g_N(\bx_0, \bx_N))$ and 
$h: (\bx, \by) \in \XYset \mapsto (h_i(\bx_0, \bx_i, \by_{-i}))_{i\in\iset} \in \Yset$.

\begin{proposition}
\label{prop:main_result}
Let $\Sset=\{(\bx,\by)\in\XYset \mid y=h(\bx, \by)\}$ be the set of solutions of the coupling equations. Let $\SprojX = \{\bx \in \Xset \mid \exists \by \in \Yset,~(\bx, \by) \in \Sset \}$ be the projection of $\Sset$ onto $\Xset$.\\
Let $L:\XYset\to\R^d$ be a function such that  $L(\bx,\by)=\bx$ for all $(\bx, \by) \in \Sset$. Define $\tilde{f}=f\circ L$ and $\tilde{g}=g\circ L$ over $\XYset$.\\
Then solving Problem \eqref{eq:mdopbm} on $\XYset$ is equivalent to solving Problem \eqref{eq:optpbm} on $\SprojX$,
i.e. $(\bx, \by)$ is a solution of \eqref{eq:mdopbm} on $\XYset$ 
if and only if $\bx$ is a solution of \eqref{eq:optpbm} on $\SprojX$.
Furthermore, their solutions do not depend on $L$.
\end{proposition}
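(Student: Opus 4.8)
The plan is to exploit the fact that the coupling equations in Problem \eqref{eq:mdopbm} force every feasible point into $\Sset$, where the link property $L(\bx,\by)=\bx$ collapses the transformed functions back to the originals. First I would rewrite the coupling constraint $\by=h(\bx,\by)$ as the membership condition $(\bx,\by)\in\Sset$, so that the feasible set of \eqref{eq:mdopbm} is exactly $\mathcal{F}_{\mathrm{mdo}}=\{(\bx,\by)\in\Sset \mid \tilde g(\bx,\by)\le 0\}$. Since $L(\bx,\by)=\bx$ holds throughout $\Sset$, on this set $\tilde f(\bx,\by)=f\circ L(\bx,\by)=f(\bx)$ and $\tilde g(\bx,\by)=g\circ L(\bx,\by)=g(\bx)$; hence $\mathcal{F}_{\mathrm{mdo}}=\{(\bx,\by)\in\Sset \mid g(\bx)\le 0\}$ and the MDO objective at any feasible point equals $f$ evaluated at its $\bx$-component. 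The point worth recording here is that the values of $L$, and therefore of $\tilde f$ and $\tilde g$, \emph{away} from $\Sset$ never enter the argument, because such points violate the coupling equations and are infeasible.

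Next I would introduce the canonical projection $\pi:(\bx,\by)\mapsto\bx$ and use the definition of $\SprojX$ to show it maps $\mathcal{F}_{\mathrm{mdo}}$ onto the feasible set $\mathcal{F}_{\mathrm{opt}}=\{\bx\in\SprojX \mid g(\bx)\le 0\}$ of Problem \eqref{eq:optpbm} restricted to $\SprojX$. Surjectivity is where the restriction to $\SprojX$ rather than to all of $\Xset$ does its work: any $\bx\in\mathcal{F}_{\mathrm{opt}}$ lies in $\SprojX$, so by definition there is at least one $\by$ with $(\bx,\by)\in\Sset$, and since $g(\bx)\le 0$ this lift is automatically feasible for \eqref{eq:mdopbm}; the reverse inclusion is immediate. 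Because $\pi$ preserves objective values, i.e. $\tilde f=f\circ\pi$ on $\mathcal{F}_{\mathrm{mdo}}$, the two problems share the same optimal value.

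From this value-preserving onto correspondence the equivalence of solution sets follows in both directions. For the forward implication I would take a minimizer $(\bx^\star,\by^\star)$ of \eqref{eq:mdopbm}, note $\bx^\star\in\mathcal{F}_{\mathrm{opt}}$, and for an arbitrary competitor $\bx\in\mathcal{F}_{\mathrm{opt}}$ pick any feasible lift $(\bx,\by)$ to obtain $f(\bx)=\tilde f(\bx,\by)\ge\tilde f(\bx^\star,\by^\star)=f(\bx^\star)$, so $\bx^\star$ is optimal for \eqref{eq:optpbm} on $\SprojX$. The backward implication is symmetric: a minimizer $\bx^\star$ of \eqref{eq:optpbm} lifts to a feasible $(\bx^\star,\by^\star)$ whose MDO objective undercuts every $\tilde f(\bx,\by)=f(\bx)$. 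Finally, independence from $L$ is read off directly, since both $\mathcal{F}_{\mathrm{mdo}}$ and the objective on it were expressed purely through $f$, $g$ and $\Sset$, with $L$ entering only via the identity $L=\mathrm{id}$ on $\Sset$. I expect the only genuine subtlety to be the phrasing of the biconditional: strictly, each optimization solution $\bx$ corresponds to the whole fiber $\{\by \mid (\bx,\by)\in\mathcal{F}_{\mathrm{mdo}}\}$, so the statement is best understood as a projection-and-lift bijection between solution sets rather than a pointwise identity, and I would state it that way to avoid ambiguity.
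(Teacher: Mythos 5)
Your proposal is correct and takes essentially the same route as the paper's proof: absorb the coupling equations into the domain $\Sset$, use $L(\bx,\by)=\bx$ on $\Sset$ to collapse $\tilde{f},\tilde{g}$ back to $f,g$, and reduce to an optimization over $\SprojX$, with independence from $L$ following because the feasible sets and objective values are expressed through $f$, $g$ and $\Sset$ alone. Your explicit projection-and-lift bookkeeping (and the remark about fibers of solutions) simply spells out rigorously what the paper states more tersely.
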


\begin{proof}
Problem \eqref{eq:mdopbm} over $\XYset$ can be rewritten over $\Sset$ as
\begin{equation*}
\begin{aligned}
\min_{(\bx, \by)\in \Sset}\quad & \tilde{f}(\bx, \by) \\
\textrm{s.t.} \quad & \tilde{g}_0\left(\bx, \by \right)\leq 0 \\
& \tilde{g}_i\left(\bx_0, \bx_i,\by_{i}\right)\leq 0,\quad \iiniset
\end{aligned}
\end{equation*}
where the coupling equations $h(\bx,\by)=\by$ have been moved from the constraints to the optimization domain $\Sset$.
Furthermore, by the definition of $L$, $\tilde{f}$ and $\tilde{g}$, we have: $\forall (\bx, \by) \in \Sset, \tilde{f}(\bx,\by)=f(\bx)$ and $\tilde{g}(\bx,\by)=g(\bx)$.
Then, Problem \eqref{eq:mdopbm} is rewritten
\begin{equation*}
\begin{aligned}
\min_{(\bx, \by)\in \Sset}\quad & f(\bx) \\
\textrm{s.t.} \quad & g_0(\bx)\leq 0 \\
& g_i(\bx_0, \bx_i)\leq 0,\quad \iiniset
\end{aligned}
\end{equation*}
Now, this can be viewed as a standard optimization problem with respect to $\bx$ since $\by$ takes place only in the optimization domain. More precisely, $\bx$ must be such that for some $\by$ we have $(\bx,\by) \in \Sset$, i.e. $\bx \in \SprojX$. Thus, solving Problem \eqref{eq:mdopbm} on $\XYset$ is equivalent to solving Problem \eqref{eq:optpbm} on $\SprojX$.\\
Finally, the solutions of these equivalent problems depend only on $\SprojX$, which depends on $h$ but not on $L$.
\end{proof}

A preliminary remark is that there is a wide choice for the link function $L$.
A simple example is given by $L(\bx,\by) = (\bx_0, \bx_{-0}+h(\bx,\by)-\by)$.
More generally, $L$ can be any function of the form
$ L(\bx,\by) = T(\bx, h(\bx,\by) - \by)$
where $T : \Xset \times \R^p \to \R^d $ is such that $T(\bx, 0) = \bx$ for all $\bx \in \Xset$. For instance, with $T(\bx,\bz) = \bx \exp(a \Vert \bz \Vert)$, for some parameter $a\in\R$, we obtain
$L(\bx,\by) = \bx \exp(a \Vert h(\bx,\by) - \by \Vert)$
where $\|\cdot\|$ is the Euclidean norm.

Besides, the methodology as a whole is generic. 
Indeed, it allows to create an MDO problem from any optimization problem, by combining any coupling equations $h$ and any link function $L$.
This framework is also scalable, i.e. adaptable to reference optimization problems whose search and constraint space dimensions are configurable. Moreover, it makes no assumptions about the coupling functions $h$, the search space $\Xset$ and the coupling space $\Yset$. 
It also handles cases where the solution of the coupling equations is not unique
since $L(\bx,\by)=\bx$ for every solution $\by$.

Finally, it is important to note that this methodology guarantees the equivalence between Problem \eqref{eq:optpbm} and \eqref{eq:mdopbm} only on a subset $\SprojX$ of the reference search domain $\Xset$, i.e. where the coupling equations admit a solution. 
This can pose a difficulty, as we shall see later.
However, Proposition \ref{prop:nonlinear_result} inspired by \cite{vanaret2017} states that it suffices to use coupling functions $h$ that are continuous and bounded, so that the equivalence is over the entire search domain $\Xset$.

\begin{proposition}
\label{prop:nonlinear_result}
Let $\Yset$ be a convex compact set. Let the coupling functions $h: (\bx, \by) \in \XYset \mapsto h(\bx,\by)\in\Yset$ be continuous according to $\by$.
Let $L:\XYset\to\R^d$ be a function such that $L(\bx,\by)=\bx$ for all $(\bx, \by) \in \Sset$. Define $\tilde{f}=f\circ L$ and $\tilde{g}=g\circ L$ over $\XYset$.
Then solving Problem \eqref{eq:mdopbm} on $\XYset$ is equivalent to solving Problem \eqref{eq:optpbm} on $\Xset$.
Furthermore, their solutions do not depend on $L$.
\end{proposition}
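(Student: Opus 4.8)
The plan is to deduce this statement directly from Proposition~\ref{prop:main_result}, whose conclusion already gives the equivalence on $\SprojX$ together with the independence from $L$. Since the inclusion $\SprojX \subseteq \Xset$ holds by definition of the projection, it suffices to prove the reverse inclusion $\Xset \subseteq \SprojX$ under the strengthened hypotheses. Once I have established $\SprojX = \Xset$, the present proposition follows by applying Proposition~\ref{prop:main_result} verbatim, with no further work on $\tilde{f}$, $\tilde{g}$, or $L$.

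The core of the argument is an existence result for the coupling equations, carried out one $\bx$ at a time. I would fix an arbitrary $\bx \in \Xset$ and consider the partial map $\phi_{\bx} : \by \mapsto h(\bx, \by)$. Because $h$ is assumed to take its values in $\Yset$, the map $\phi_{\bx}$ sends $\Yset$ into itself; and because $h$ is continuous with respect to $\by$, the map $\phi_{\bx}$ is continuous on $\Yset$. Since $\Yset$ is a nonempty convex compact subset of $\R^p$, Brouwer's fixed-point theorem (in the form: a continuous self-map of a nonempty compact convex set admits a fixed point) yields some $\by^{\star} \in \Yset$ with $\phi_{\bx}(\by^{\star}) = \by^{\star}$, that is $\by^{\star} = h(\bx, \by^{\star})$. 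By definition of $\Sset$ this means $(\bx, \by^{\star}) \in \Sset$, hence $\bx \in \SprojX$. As $\bx$ was arbitrary, $\Xset \subseteq \SprojX$, and therefore $\SprojX = \Xset$.

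The main obstacle, and really the only non-routine ingredient, is the passage from ``for every $\bx$ the coupling system has a solution'' to an actual existence theorem; this is exactly where the three extra hypotheses — convexity and compactness of $\Yset$, continuity of $h$ in $\by$, and $h(\XYset) \subseteq \Yset$ — are consumed, since together they are precisely what Brouwer's theorem requires. I would be careful to invoke the version valid for a general nonempty compact convex set rather than merely for a ball, and to flag the implicit nonemptiness of $\Yset$. Everything downstream is inherited from Proposition~\ref{prop:main_result}, so the proof reduces to verifying these hypotheses and citing the fixed-point theorem.
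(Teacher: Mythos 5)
Your proof is correct and follows essentially the same route as the paper's: fix $\bx \in \Xset$, apply Brouwer's fixed-point theorem to the continuous self-map $\by \mapsto h(\bx,\by)$ of the convex compact $\Yset$ to conclude $\SprojX = \Xset$, then invoke Proposition~\ref{prop:main_result}. Your explicit remarks on the nonemptiness of $\Yset$ and on using the general convex-compact form of Brouwer's theorem are careful touches the paper leaves implicit, but they do not constitute a different argument.
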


\begin{proof}
Let $\bx\in\Xset$ and $\varphi_{\bx}:\by\in\Yset\mapsto h(\bx,\by) \in\Yset$ a continuous function where $\Yset$ is a convex compact set. 
There exists a $\by\in\Yset$ such that $\varphi_{\bx}(\by)=\by$ according to the Brouwer's fixed point theorem~\cite{brouwer1911abbildung,park1999ninety}.
Consequently, there exists a solution to the coupling equations for any $\bx\in\Xset$, i.e. $\SprojX=\Xset$.
Therefore, by definition of $L$, $\tilde{f}$ and $\tilde{g}$, Proposition \ref{prop:main_result} states that Problem \eqref{eq:mdopbm}  is equivalent to Problem \eqref{eq:optpbm} on $\Xset$ and their solutions do not depend on $L$.
\end{proof}

Mind that Proposition \ref{prop:nonlinear_result} requires, in addition to continuity, that the coupling functions $h$ have its values in $\Yset$, which allows to apply Brouwer's theorem.
A simple example is given by
\begin{equation}\label{eq:nl1}
h_i(\bx_0, \bx_i, \by_{-i}) = \left(m_{i,j}+(M_{i,j}-m_{i,j})\sigma_{i,j}\!\left(\psi_{i,j}(\bx_0,\bx_i,\by_{-i})\right)\right)_{j\in\llbracket 1,p_i \rrbracket}
\end{equation}
where $\psi_{i,j}$ is the $j$-th component of $\psi_i:\Xset_0\times\Xset_i\times\Yset_{-i}\to\Yset_i$ and $\sigma_{i,j}:\R\to[0,1]$ are continuous functions, with $\Yset=\prod_{i=1}^N\prod_{j=1}^{p_i}[m_{i,j},M_{i,j}]$.
For instance, 
\begin{equation}\label{eq:nl2}
\psi_i:(\bx_0,\bx_i,\by_{-i})\mapsto\ba_i - \bB_{i,0} \bx_0 - \bB_{i,i} \bx_i + \sum_{j \neq i} \bC_{i,j} \by_j
\end{equation}
with $\bB_{i,i}\in \mat{p_i}{d_i}$, $\bC_{i,j}\in\mathcal{M}_{p_i}(\R)$ and $\ba_i\in\mathcal{M}_{p_i,1}(\R)$.
Classical examples of $\sigma$ are given by the sigmoid functions used in the artificial neural networks, such as the logistic function $\sigma(x)=(1+e^{-ax})^{-1}$ with $a>0$.
Note that this choice of $\sigma$ makes the coupling functions $h$ non-linear, which may be of practical interest when benchmarking algorithms.\\

When $h$ and $\Yset$ are not as defined in Proposition \ref{prop:nonlinear_result}, the application of the methodology may be less straightforward. To illustrate this difficulty, let consider the three-dimensional Rosenbrock problem \cite{rosenbrock1960} over $\Xset =[-2,2]^3$, with known solution $\bx^*=(1,1,1)$, and two disciplines with the following coupling functions:
$$ \begin{cases}
&h_1(x_0, x_1, y_2) = b_1(x_0, x_1) + \sgn(x_1) y_2 \\
&h_2(x_0, x_2, y_1) = b_2(x_0, x_2) + \sgn(x_2) y_1 
\end{cases} $$
where $b_1$ and $b_2$ are some functions, and $\sgn(x)$ is the sign function (with the convention $\sgn(0) = 0$). The functions $b_1$ and $b_2$ can be very general, but we require that $b_1(x_0, x_1) \neq - b_2(x_0, x_2)$ when $x_1$ and $x_2$ have the same sign; for instance, we can choose two (strictly) positive functions.
The coupling equations $h_i(x_0, x_i, y_{-i})=y_i$ ($i=1, 2$) form the  linear system
$$ A(x) \begin{pmatrix}
    y_1 \\ y_2
\end{pmatrix} = \begin{pmatrix}
    b_1(x_0, x_1) \\ b_2(x_0, x_2)
\end{pmatrix}$$
where $A(x) = \begin{pmatrix}
    1 & -\sgn(x_2) \\ -\sgn(x_1)&  1
\end{pmatrix}$. 
We can see that these functions are not continuous and that $(\bx,\by) \in \Sset$ if and only if $\det(A(\bx)) \neq 0$. Indeed, if $\det(A(x))=0$, the  system does not have a solution due to the requirements on $b_1, b_2$. Thus, $(\bx,\by) \in \Sset$ if and only if $x_1$ and $x_2$ do not have the same sign, and 
$$\SprojX = ([-2,2] \times [0,2] \times [-2, 0]) \cup ([-2,2] \times [-2,0] \times [0, 2])$$
which is a strict subset of $\Xset$, representing half of the reference search space as illustrated in Figure \ref{fig:SprojX}. 
Thus, solving the MDO problem is equivalent to solving the original optimization problem on a smaller part $\SprojX$ of the search domain $\Xset$. 
More troublesome, $\SprojX$ does not contain the reference solution $\bx^*$ and the solution of the original optimization problem over $\SprojX$ may be unknown. This makes it impossible to use the MDO problem for benchmarking algorithms. In this case, the difficulty can easily be overcome by modifying the coupling functions as
$$ \begin{cases}
&h_1(x_0, x_1, y_2) = b_1(x_0, x_1) + \sgn(x_1) y_2 \\
&h_2(x_0, x_2, y_1) = b_2(x_0, x_2) - \sgn(x_2) y_1 
\end{cases} $$
where $b_1(x_0, x_1) \neq - b_2(x_0, x_2)$ when $x_1$ and $x_2$ do not have the same sign. $\SprojX$ is then written as
$$\SprojX = ([-2,2] \times [0,2] \times [0, 2])$$
and $\bx^*\in\SprojX$.
Thus, the reference solution can be used to compare the performance of MDO algorithms.

\begin{figure}[!ht]
\centering
\begin{tikzpicture}[line join=round, line cap=round, thick, scale=1.2]

\pt{P000}{-1}{-1}{-1}
\pt{P001}{-1}{-1}{ 1}
\pt{P010}{-1}{ 1}{-1}
\pt{P011}{-1}{ 1}{ 1}
\pt{P100}{ 1}{-1}{-1}
\pt{P101}{ 1}{-1}{ 1}
\pt{P110}{ 1}{ 1}{-1}
\pt{P111}{ 1}{ 1}{ 1}

\pt{A_m10m1}{-1}{0}{-1}
\pt{A_m100}{-1}{0}{0}
\pt{A_m11m1}{-1}{1}{-1}
\pt{A_m110}{-1}{1}{0}
\pt{A100m1}{ 1}{0}{-1}
\pt{A1000}{ 1}{0}{0}
\pt{A101m1}{ 1}{1}{-1}
\pt{A1010}{ 1}{1}{0}

\pt{B_m1m10}{-1}{-1}{0}
\pt{B_m1m11}{-1}{-1}{1}
\pt{B_m100}{-1}{0}{0}
\pt{B_m101}{-1}{0}{1}
\pt{B10m10}{ 1}{-1}{0}
\pt{B10m11}{ 1}{-1}{1}
\pt{B1000}{ 1}{0}{0}
\pt{B1001}{ 1}{0}{1}

\fill[black!45] (A1000)--(A1010)--(A101m1)--(A100m1)--cycle;
\fill[black!30]
  (A_m100)--(A1000)--(A1010)--(A_m110)--cycle;
\fill[black!20]
  (A_m11m1)--(A_m110)--(A1010)--(A101m1)--cycle;

\fill[black!45] (B1001)--(B10m11)--(B10m10)--(B1000)--cycle;
\fill[black!30]
  (B_m1m11)--(B10m11)--(B1001)--(B_m101)--cycle;
\fill[black!20]
  (B_m100)--(B1000)--(B1001)--(B_m101)--cycle;

\draw (P100)--(P101);
\draw (P110)--(P111);
\draw (P010)--(P011);
\draw (P100)--(P110)--(P010);
\draw (P001)--(P101)--(P111)--(P011)--cycle;

\coordinate (O) at (0,0);
\draw[->,dashed] (O) -- (2,0) node[right] {$x_0$};
\pt{A}{0.5}{0}{0}
\pt{B}{0.5}{0}{0.5}
\draw[dotted] (A) -- (B);
\pt{A}{0}{0}{0.5}
\pt{B}{0.5}{0}{0.5}
\draw[dotted] (A) -- (B);
\pt{A}{0.5}{0}{0.5}
\pt{B}{0.5}{0.5}{0.5}
\draw[dotted] (A) -- (B);
\pt{A}{0}{0.5}{0}
\pt{B}{0}{0.5}{0.5}
\draw[dotted] (A) -- (B);
\pt{A}{0}{0.5}{0.5}
\pt{B}{0}{0}{0.5}
\draw[dotted] (A) -- (B);
\pt{A}{0}{0.5}{0.5}
\pt{B}{0.5}{0.5}{0.5}
\draw[dotted] (A) -- (B);
\pt{A}{0.5}{0.5}{0.5}
\pt{B}{0.5}{0.5}{0}
\draw[dotted] (A) -- (B);
\pt{A}{0.5}{0.5}{0}
\pt{B}{0.5}{0}{0}
\draw[dotted] (A) -- (B);
\pt{A}{0}{0.5}{0}
\pt{B}{0.5}{0.5}{0}
\draw[dotted] (A) -- (B);
\pt{M}{0.5}{0.5}{0.5}
\draw[fill=white] (M) circle (2pt);
\draw[->,dashed] (O) -- (0,2) node[above] {$x_2$};
\draw[->,dashed] (O) -- (-1.2,-0.8) node[below left] {$x_1$};

\end{tikzpicture}

\caption{Illustration of the domain issue when the assumptions of Proposition~\ref{prop:nonlinear_result} are not satisfied. The equivalent search space $\SprojX$ (in gray) is a strict subset of the reference search space $\Xset=[-2,2]^3$ of the three-dimensional Rosenbrock problem when the coupling functions are $h_i(x_0, x_i, y_{-i}) = b_i(x_0, x_i) + \sgn(x_i) y_{-i}$ ($i=1,2$). The solution $\bx^*=(1,1,1)$ of the reference optimization problem represented by a white dot is outside this domain.}
\label{fig:SprojX}

\end{figure}

\subsection{Case of linear coupling functions with constant coefficients}\label{subsec:linear}

In this section, we propose a case where the coupling functions are linear with constant coefficients sampled independently at random, so that the coupling equations admit an explicit solution.
This can be useful to benchmark the coupling algorithms involved in the MDO formulations such as Jacobi and Gauss-Seidel technique in the case of the MDF formulation.

\begin{proposition}\label{prop:linear}
Let us consider Problem \eqref{eq:mdopbm} with the linear coupling functions 
\begin{equation}\label{eq:linear_h}
\iiniset,\quad 
h_i(\bx_0, \bx_i, \by_{-i}) = \ba_i - \bB_{i,0} \bx_0 - \bB_{i,i} \bx_i + \sum_{j \neq i} \bC_{i,j} \by_j, 
\end{equation}
where $\bB_{i,i}\in \mat{p_i}{d_i}$, $\bC_{i,j}\in\mathcal{M}_{p_i}(\R)$ and $\ba_i\in\mathcal{M}_{p_i,1}(\R)$.
This multidisciplinary system can be expressed in a matrix form as
$$\bC\by = \ba - \bB \bx$$ 
with 
$$
\ba =
\begin{pmatrix}\ba_1\\\vdots\\\ba_N\end{pmatrix}
\bB = 
\begin{pmatrix}
\bB_{i,0} & \bB_{1,1} & \cdots & \bzero \\
\vdots  & \vdots  & \ddots & \vdots  \\
\bB_{N,0} & \bzero & \cdots & \bB_{N,N}
\end{pmatrix}
\quad
\bC = 
\mathbf{I}-\begin{pmatrix}
\bzero & \bC_{1,2} & \cdots & \bC_{1,N} \\
\bC_{2,1} & \bzero_{p_2} & \ddots & \bC_{2,N} \\
\vdots & \ddots & \ddots & \vdots \\
\bC_{N,1} & \bC_{N,2} & \ldots & \bzero \\
\end{pmatrix}
$$
where $\mathbf{I}$ and $\bzero$ denote the identity and zeros matrices respectively, with flexible shape. Assume that $\bC$ is invertible.
Let $L:\XYset\to\R^d$ be a function such that  $L(\bx,\by)=\bx$ for all $(\bx, \by) \in \Sset$. Define $\tilde{f}=f\circ L$ and $\tilde{g}=g\circ L$ over $\XYset$.\\
Then, Problem \eqref{eq:mdopbm}  is equivalent to Problem \eqref{eq:optpbm} on $\SprojX = \{\bx \in \Xset \mid \bC^{-1}(\ba - \bB \bx) \in \Yset \}$.

An example of explicit function $L$ is given by
\begin{equation}\label{eq:linear_link}
L(\bx,\by)=\left(\bx_0,\bx_1 + \by_1 - \balpha_1 - \sum_{j=0}^N \bbeta_{1, j}\bx_j,\ldots,\bx_N + \by_N - \balpha_N - \sum_{j=0}^N \bbeta_{N, j}\bx_j\right)
\end{equation}
where 
$
\bbeta = \begin{pmatrix}
\bbeta_{1,0} & \bbeta_{1,1} & \cdots & \bbeta_{1,N} \\
\vdots  & \vdots  & \ddots & \vdots  \\
\bbeta_{N,0} & \bbeta_{N,1} & \cdots & \bbeta_{N,N} 
\end{pmatrix}=-\bC^{-1} \bB
$
and
$ \balpha = \begin{pmatrix}
\balpha_{1} \\
\dots \\
\balpha_{N}    
\end{pmatrix} = \bC^{-1} \ba.
$
\end{proposition}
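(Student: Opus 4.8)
The plan is to reduce everything to Proposition~\ref{prop:main_result} by making the solution set $\Sset$ of the coupling equations explicit, which is possible here because the coupling is affine in $\by$. First I would rewrite each coupling equation $\by_i = h_i(\bx_0,\bx_i,\by_{-i})$ from \eqref{eq:linear_h} by moving all the $\by$-terms to the left-hand side, obtaining $\by_i - \sum_{j\neq i}\bC_{i,j}\by_j = \ba_i - \bB_{i,0}\bx_0 - \bB_{i,i}\bx_i$. Stacking these $N$ identities and reading off the block structure gives exactly $\bC\by = \ba - \bB\bx$: the identity block of $\bC$ comes from the isolated $\by_i$ on the left, the off-diagonal blocks $-\bC_{i,j}$ from the transported coupling terms, and the block pattern of $\bB$ (with a shared first column $\bB_{\cdot,0}$ and diagonal blocks $\bB_{i,i}$) from the dependence of $h_i$ on $\bx_0$ and $\bx_i$ only. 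This step is pure bookkeeping; the only care needed is to keep the global variable $\bx_0$, which contributes to every block row of $\bB$, distinct from the local variables $\bx_i$.

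Next I would use the assumed invertibility of $\bC$. For a fixed $\bx$, the affine system $\bC\by = \ba - \bB\bx$ has the unique solution $\by = \bC^{-1}(\ba - \bB\bx)$. Since membership in $\Sset$ also requires $\by \in \Yset$ by definition of $\XYset$, a point $\bx$ lies in $\SprojX$ precisely when this unique $\by$ belongs to $\Yset$, i.e. $\SprojX = \{\bx\in\Xset \mid \bC^{-1}(\ba-\bB\bx)\in\Yset\}$, as claimed. The equivalence of Problem~\eqref{eq:mdopbm} with Problem~\eqref{eq:optpbm} on this set, together with the independence of the solutions from $L$, is then immediate from Proposition~\ref{prop:main_result}, whose hypotheses on $L$, $\tilde f$ and $\tilde g$ are exactly those assumed here.

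Finally, to justify that \eqref{eq:linear_link} is a legitimate link function, I would verify $L(\bx,\by)=\bx$ for every $(\bx,\by)\in\Sset$. On $\Sset$ the previous step gives $\by = \bC^{-1}(\ba-\bB\bx) = \balpha + \bbeta\bx$, using the definitions $\balpha = \bC^{-1}\ba$ and $\bbeta = -\bC^{-1}\bB$; componentwise this reads $\by_i = \balpha_i + \sum_{j=0}^N \bbeta_{i,j}\bx_j$. Substituting this into the $i$-th local block of \eqref{eq:linear_link} makes the correction term $\by_i - \balpha_i - \sum_{j=0}^N \bbeta_{i,j}\bx_j$ vanish, leaving $\bx_i$, while the global block equals $\bx_0$ by construction. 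Hence $L(\bx,\by)=\bx$ on $\Sset$ and $L$ satisfies the hypothesis of Proposition~\ref{prop:main_result}.

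I do not expect a genuine obstacle: the result is essentially a concrete instantiation of Proposition~\ref{prop:main_result}, and the only place demanding attention is the correct identification of the block matrices $\bB$ and $\bC$ and the telescoping cancellation in $L$, both of which become routine once the $\bx_0$/$\bx_i$ distinction is handled carefully.
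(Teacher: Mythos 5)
Your proposal is correct and takes essentially the same route as the paper's own proof: both use the invertibility of $\bC$ to solve the affine coupling system explicitly as $\by = \bC^{-1}(\ba - \bB\bx)$, read off the explicit form of $\SprojX$, check that the link function \eqref{eq:linear_link} satisfies $L(\bx,\by)=\bx$ on $\Sset$, and conclude by invoking Proposition~\ref{prop:main_result}. Your write-up is merely a bit more explicit on the block-matrix bookkeeping and the cancellation in $L$, which the paper treats as immediate.
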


\begin{proof}
The proof mimics the one of Proposition \ref{prop:main_result}, with some simplifications.
As $C$ is assumed invertible, the coupling equation $h(\bx, \by) = \by$ is equivalent to the explicit form 
$$
\by = c(\bx) = -\bC^{-1}\bB \bx + \bC^{-1}\ba = \boldsymbol{\beta} \bx + \boldsymbol{\alpha}
$$
Then, consider the function $L$ defined by
\[
L(\bx, \by) = (\bx_0, \bx_1 + \by_1 - c_1(\bx), \ldots, \bx_N + \by_N - c_N(\bx)).
\]
Clearly,
the condition $L(\bx, \by) = \bx$ is equivalent to $c(\bx) = \by$, which is in turn equivalent to $h(\bx, \by) = \by.$
The rest of the proof is then exactly the same as in the proof of Proposition \ref{prop:main_result}. We do not reproduce it for the sake of brevity. Notice that, here, $\SprojX$ has an explicit form as finding $\bx \in \Xset$ such that $h(\bx, \by) = \by$ is equivalent to find $\bx \in \Xset$ such that $\bC^{-1}(\ba - \bB \bx) \in \Yset$.
\end{proof}

In practice, we may choose $\Yset=\R^p$, which leads to $\SprojX=\Xset$. This makes this configuration usable as is.
Otherwise, for a general choice of $\Yset$,
the optimization domain $\SprojX=\{\bx \in \Xset \mid \bC^{-1}(\ba - \bB \bx) \in \Yset \}$ is harder to compute. 
Nevertheless, it may be used in some cases. For instance, if $\Yset=[m,M]^p$, then $x\in\SprojX$ is equivalent to the linear inequality constraints $m\leq (\bC^{-1}(\ba - \bB \bx))_i \leq M$, $\forall i\in\llbracket 1,p\rrbracket$.

Moreover, the condition that $\bC$ is invertible is easy to obtain. Indeed, it is verified with probability 1 when its coefficients are sampled independently at random, as proved in Proposition~\ref{prop:Cinvertible}. The latter result is not surprising as unstructured random matrices are known to be invertible with probability one (see, e.g., \cite{zeng2024expressive}). However, $\bC$ has here a specific structure, involving deterministic and random block matrices. Thus, we provide a dedicated proof. 

\begin{proposition}\label{prop:Cinvertible}
	Let $\mu$ a continuous probability measure on $\R$, i.e. admitting a density function with respect to the Lebesgue measure. Let us consider the matrix $\bC$ defined in Proposition \ref{prop:linear}.
	If the coefficients of $\bC$ are sampled independently at random from $\mu$, then $\bC$ is invertible with probability $1$.
\end{proposition}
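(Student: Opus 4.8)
The plan is to treat $\det(\bC)$ as a polynomial in the random entries and to exploit the classical fact that a nonzero polynomial vanishes only on a Lebesgue-negligible set. First I would pin down exactly which coefficients are random: the diagonal blocks of $\bC$ are fixed to the identity by construction, so the genuinely random coefficients are the entries of the off-diagonal blocks $\bC_{i,j}$ (for $i \neq j$). Listing them as independent real random variables $c_1, \ldots, c_n$, each distributed according to $\mu$, every entry of $\bC$ is then either a fixed constant (coming from $\mathbf{I}$) or one of the $-c_k$.

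Next, by the Leibniz expansion of the determinant, $\det(\bC)$ is a polynomial $P(c_1, \ldots, c_n)$ with integer coefficients in these variables. The decisive observation is that $P$ is \emph{not} the zero polynomial: evaluating it at the point where all off-diagonal entries vanish gives $\bC = \mathbf{I}$ and hence $P(0, \ldots, 0) = \det(\mathbf{I}) = 1 \neq 0$. Thus $P \not\equiv 0$.

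I would then invoke the following standard lemma: the zero set $Z = \{(t_1, \ldots, t_n) \in \R^n \mid P(t_1, \ldots, t_n) = 0\}$ of a nonzero polynomial has Lebesgue measure zero. This is proved by induction on $n$: for $n = 1$ a nonzero polynomial has finitely many roots, and for the inductive step one writes $P$ as a polynomial in $t_n$ whose coefficients are polynomials in $(t_1, \ldots, t_{n-1})$, and applies Fubini's theorem, splitting according to whether the leading coefficient vanishes. Finally, since the $c_k$ are independent and each admits a density with respect to the Lebesgue measure, their joint law is the product measure $\mu^{\otimes n}$, which admits the product of the densities as its own density and is therefore absolutely continuous with respect to the Lebesgue measure on $\R^n$. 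Consequently $\prob(\det(\bC) = 0) = \mu^{\otimes n}(Z) = 0$, so $\bC$ is invertible with probability $1$.

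The argument is essentially bookkeeping once the polynomial lemma is available; the only step requiring genuine care is that lemma, and more precisely the passage from ``Lebesgue-null'' to ``$\mu^{\otimes n}$-null'', which is exactly where the continuity (absolute continuity) assumption on $\mu$ is used. Without it --- for instance if $\mu$ had an atom --- the conclusion could fail, so this hypothesis cannot be dropped.
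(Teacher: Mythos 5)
Your proposal is correct, and its skeleton is the same as the paper's: expand $\det(\bC)$ via the Leibniz formula as a polynomial $P$ in the random entries, and note that $P\not\equiv 0$ because its constant term, obtained by zeroing all random entries, equals $\det(\mathbf{I})=1$. The genuine difference lies in the supporting lemma. The paper's Lemma~\ref{lemma:Cinvertible} is tailored to the situation: it applies to nonzero polynomials of degree at most $1$ in each variable --- precisely the multilinearity that the Leibniz formula guarantees, since each permutation selects every matrix entry at most once --- and it is proved by induction directly at the level of probabilities, writing $P_{n+1}(x_1,\dots,x_{n+1}) = x_{n+1}Q(x_1,\dots,x_n)+R(x_1,\dots,x_n)$ and integrating out the last variable by independence. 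That argument never invokes Lebesgue measure on $\R^n$ and uses only that $\mu$ is atomless, i.e.\ $\prob(U=c)=0$ for every $c\in\R$. You instead establish the general statement that the zero set of \emph{any} nonzero polynomial is Lebesgue-null in $\R^n$ (by essentially the same Fubini-type induction, carried out on measures rather than probabilities) and then transfer it to the law of the entries through absolute continuity of $\mu^{\otimes n}$ with respect to Lebesgue measure. Your route buys generality --- it handles polynomials of arbitrary degree, so you never need to notice the multilinear structure of the determinant --- at the cost of genuinely requiring the density hypothesis: a singular atomless law (e.g.\ the Cantor distribution) would break your transfer step while leaving the paper's conditioning argument intact, so the paper's proof in fact works under the weaker assumption that $\mu$ has no atoms. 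Since Proposition~\ref{prop:Cinvertible} assumes a density anyway, both proofs are complete, and your closing remark correctly locates where this hypothesis enters your argument (your atom example is sound: with $p_1=p_2=1$ and $\mu$ a point mass at $1$, one gets $\det(\bC)=1-[\bC]_{1,2}[\bC]_{2,1}=0$ with positive probability).
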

The proof relies on the following lemma.
\begin{lemma} \label{lemma:Cinvertible}
	Let $\mu$ be a continuous probability measure on $\R$, and let $U_1, \dots, U_n$ be independent random variables with probability distribution $\mu$.\\
	Let $P(x_1, \dots, x_n)$ be a non-zero polynomial with degree at most $1$ with respect to each $x_i$ ($i= 1, \dots, n$).
	Then, $\prob(P(U_1, \dots, U_n) = 0) = 0$.
\end{lemma}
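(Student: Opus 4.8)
The plan is to prove the statement by induction on the number of variables $n$, exploiting the fact that a polynomial of degree at most $1$ in each variable is affine in any single variable once the others are fixed. The two elementary facts driving the argument are that a non-constant affine function of one real variable has exactly one root, and that a continuous measure $\mu$ assigns zero mass to every single point.

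First I would treat the base case $n=1$. Here $P(x_1)=a+bx_1$ is a non-zero polynomial, so $(a,b)\neq(0,0)$. If $b\neq 0$, the equation $P(x_1)=0$ has the unique root $x_1=-a/b$, and since $\mu$ admits a density it charges no point, hence $\prob(P(U_1)=0)=\prob(U_1=-a/b)=0$. If $b=0$ then $a\neq 0$ and $P$ never vanishes, so the probability is trivially $0$.

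For the inductive step, assume the claim for $n-1$ variables. Since $P$ has degree at most $1$ in $x_n$, I would split off the last variable and write $P(x_1,\dots,x_n)=Q(x_1,\dots,x_{n-1})+x_n\,R(x_1,\dots,x_{n-1})$, where $Q$ and $R$ are the constant and linear parts in $x_n$. Both $Q$ and $R$ are polynomials of degree at most $1$ in each of $x_1,\dots,x_{n-1}$, and because $P$ is non-zero, at least one of $Q,R$ is a non-zero polynomial. Conditioning on $(U_1,\dots,U_{n-1})$ and using independence, the conditional law of $U_n$ is still $\mu$; for fixed values $u=(u_1,\dots,u_{n-1})$ the map $x_n\mapsto Q(u)+x_n R(u)$ is affine, so it has at most one root when $R(u)\neq 0$ (conditional probability $0$ by continuity of $\mu$), is a non-zero constant when $R(u)=0\neq Q(u)$ (again conditional probability $0$), and vanishes identically only when $Q(u)=R(u)=0$. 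Hence the conditional probability that $P(u,U_n)=0$ equals the indicator of the event $\{Q(u)=0,\ R(u)=0\}$.

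Integrating this identity over $(U_1,\dots,U_{n-1})$ via Fubini's theorem gives
\[
\prob\bigl(P(U_1,\dots,U_n)=0\bigr)=\prob\bigl(Q(U_1,\dots,U_{n-1})=0 \text{ and } R(U_1,\dots,U_{n-1})=0\bigr).
\]
Since one of $Q,R$ is a non-zero polynomial of degree at most $1$ in each remaining variable, the induction hypothesis forces its zero set to have probability $0$, and the right-hand side is bounded by this probability; therefore $\prob(P(U_1,\dots,U_n)=0)=0$, completing the induction. The main obstacle I anticipate is one of rigor rather than of idea: cleanly justifying the conditioning step, namely that the conditional distribution of $U_n$ given $(U_1,\dots,U_{n-1})$ is indeed $\mu$ and that the integrand above is measurable so that Fubini's theorem applies. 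Everything else reduces to the two elementary observations noted at the outset.
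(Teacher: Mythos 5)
Your proof is correct and takes essentially the same route as the paper: induction on $n$, splitting off the last variable so that $P$ is affine in it, and using the continuity of $\mu$ (via Fubini/conditioning) to show the at-most-one root contributes zero probability at each fixed value of the remaining variables. The only cosmetic difference is that you replace the paper's explicit case split (coefficient polynomial identically zero or not) by the single identity $\prob(P(U_1,\dots,U_n)=0)=\prob(Q(U_1,\dots,U_{n-1})=0 \text{ and } R(U_1,\dots,U_{n-1})=0)$ followed by a bound from the induction hypothesis, and the conditioning step you flag is routine since the integrand is the indicator of a polynomial zero set, which is Borel measurable.
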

To see how Proposition~\ref{prop:Cinvertible} results from Lemma~\ref{lemma:Cinvertible}, let us recall the definition of the determinant of $\bC$, 
$$ \det(\bC) = \sum_{\sigma \in S_n} \epsilon(\sigma) \prod_{i=1}^n [\bC]_{i, \sigma(i)}.$$
Here $S_n$ is the set of all bijections of $\{1, \dots, n\}$, $\varepsilon(\sigma)$ is the signature of $\sigma$ (which can be equal to $1$ or $-1$), and $[\bC]_{i,j}$ is the coefficient $(i,j)$ of $\bC$. As $\sigma$ are bijections, $\det(\bC)$ is a polynomial of the non-constant coefficients of $\bC$, of degree at most $1$ with respect to each of them. 
It is non-zero because its constant term
(obtained by setting all non-constant coefficients of $\bC$ to zero), is equal to $1$. 
Thus, if the coefficients of $\bC$ are sampled independently at random from $\mu$, then by Lemma~\ref{lemma:Cinvertible}, 
$$ \prob(\det(\bC) = 0) = 0,$$
meaning that $\bC$ is invertible with probability $1$.\\
It remains to prove Lemma~\ref{lemma:Cinvertible}.
\begin{proof} of Lemma~\ref{lemma:Cinvertible}.\\
	The proof is by induction on $n$.\\
	If $n=1$, we have $P(x_1) = a x_1 + b$ where $a, b$ are not all zero. If $a=0$, then $b \neq 0$ and $\prob(P(U_1) = 0) = \prob(b =0) = 0$. If $a \neq 0$, then $\prob(P(U_1) = 0) = \prob(U_1 = -b/a)$ which is equal to zero as $\mu$ is a continuous probability measure.\\
	Now, let us assume that the lemma is true for all integers up to $n$, and let us prove it for $n+1$. Thus let $P_{n+1}(x_1, \dots, x_{n+1})$ be a non-zero polynomial whose degree at most $1$ with respect to each $x_i$ ($i=1, \dots, n+1$). 
	Thus, we can write,
	$$ P_{n+1}(x_1, \dots, x_{n+1}) = x_{n+1} Q(x_1, \dots, x_n) + R(x_1, \dots, x_n) $$
	where $Q, R$ are polynomials whose degree at most $1$ with respect to their variables, and where at least one of $Q, R$ is non-zero. \\
	If $Q \equiv 0$, then $P_{n+1}(x_1, \dots, x_{n+1}) = R(x_1, \dots, x_n)$, where $R$ is non-zero. Thus, by the induction property applied to $R$, we get $\prob(P_{n+1}(U_1, \dots, U_{n+1})=0) = 0$.\\
	Now, let consider the other case, i.e. $Q$ is non-zero. Then, by the induction property applied to $Q$, $\prob(Q(U_1, \dots, U_{n}) \neq 0) = 1$. Thus, we have
	\begin{eqnarray*}
	& \prob(P_{n+1}(U_1, \dots, U_{n+1}) = 0) = \\
	& \int_{Q(x_1, \dots, x_n) \neq 0} \prob(U_{n+1} Q(x_1, \dots, x_n) + R(x_1, \dots, x_n) = 0) d\mu_1(x_1) \dots d\mu_n(x_n) \\
	\end{eqnarray*}
	The integral is equal to zero, since when $Q(x_1, \dots, x_n) \neq 0$, the probability that $U_{n+1}$ equals $-R(x_1, \dots, x_n)/\allowbreak Q(x_1, \dots, x_n)$ is equal to zero as $\mu$ is a continuous probability measure. This concludes the proof.
\end{proof}

\section{Numerical experiments}\label{sec:experiments}

\subsection{Reference optimization problem}

This section illustrates the methodology on an optimization problem 
using the multidimensional Rosenbrock function \cite{rosenbrock1960,rosenbrock1975}:
\begin{equation}\label{eq:rosenbrock}
    \min_{\bx\in[-2,2]^N} f(\bx)
\end{equation}
where
$$f(\bx)=\sum_{i=1}^{N-1} 100(x_{i+1}-x_i^2)^2 + (1-x_i)^2$$

This minimization problem admits a unique solution $\bx^*$,
namely the unit vector $(1,1,\ldots)$ of $\R^N$ where $f(\bx^*)=0$;
it will be used as the reference.
For $N\geq 4$,
the function $f: \R^N \to \R_+$ has also a local minimum equal to 4 at point $\tilde{\bx}=(-1,1,1,\ldots)$ \cite{Shang2006}.
In addition to this local minimum $\tilde{\bx}$, 
many saddle points can make the search of the global optimum $\bx^*$ a hard task~\cite{Rosenbrock2009}.

\subsection{Objective}

The objective of this numerical study is to compare the performance of two coupling algorithms
for solving an MDO version of Problem \eqref{eq:rosenbrock} using the MDF formulation.
These algorithms are the Jacobi and Gauss-Seidel point-fixed methods.
All other things are equal,
in particular the gradient-based optimization algorithm SLSQP.

The metrics of interest are 
the Euclidean distance $\Delta_{\bx} = \|\bx-\bx^*\|_2$ to $\bx^*$,
the distance $\Delta_f = |f(\bx)-f(\bx^*)|$ to $f(\bx^*)$ 
and the numbers of evaluations $n_{\phi}$ and differentiations $n_{\phi}'$ of the different functions $\phi\in\{f,L,h_1,\ldots,h_N\}$. 
We will also consider $\bar{n}_h$ and $\bar{n}_h'$ defined as the averages of $n_{h_1},\ldots,n_{h_N}$ and $n_{h_1}',\ldots,n_{h_N}'$ respectively.
The comparison will be repeated 100 times using different initial values of $\bx$ obtained from a Latin hypercube sampling (LHS), and the results will be displayed as \texttt{mean (standard deviation)}.

The open-source Python library GEMSEO\textsuperscript{\tiny\textsf{\textregistered}} (https://www.gemseo.org) dedicated to MDO is used to carry out this study \cite{gemseo2018},
with the NLopt implementation \cite{NLopt} of the SLSQP algorithm \cite{SLSQP} and the OpenTURNS library \cite{openturns} for design of experiments.

For the sake of clarity,
we start with the 3-dimensional Rosenbrock problem (i.e., $\bx=(x_0,x_1,x_2)$ and $N=2$),
whose associated MDO problem involves two strongly coupled disciplines. 
More precisely,
Section \ref{subsec:construction} will illustrate the transformation of Problem \eqref{eq:rosenbrock} into an MDO problem
and Section \ref{subsec:problem_1} will compare the coupling algorithms using different starting points.
Then,
Sections \ref{subsec:problem_2} and \ref{subsec:problem_3} will carry out the same type of comparison,
using six disciplines with scalar design variables and two disciplines with vectorial design variables respectively.
Finally, Section \ref{subsec:problem_4} will consider the case of non-linear couplings.

\subsection{Construction of the MDO problem with scalar design variables}\label{subsec:construction}

By Proposition \ref{prop:linear},
the optimization problem \eqref{eq:rosenbrock} can be transformed into the MDO problem
\begin{equation}\label{eq:rosenbrockmdo}
\begin{aligned}
\min_{\bx\in[-2,2]^3\atop \by\in\R^3}\quad & \tilde{f}(\bx, \by) \\
\textrm{s.t.} \quad & y_1 = h_1(x_0, x_1, y_2) \\
& y_2 = h_2(x_0, x_2, y_1).
\end{aligned}
\end{equation}
The latter can be reformulated using the MDF formulation as
\begin{equation}\label{eq:rosenbrockmdf}
\min_{\bx\in[-2,2]^3}~\tilde{f}(\bx, \by(\bx))
\end{equation}
where $\by(\bx)=\bC^{-1}(\ba-\bB\bx)$ is the solution of the coupling equation system at $\bx$.

Figure \ref{fig:coupling_graph} shows the coupling graph of the resulting multidisciplinary system.
The design variables are represented by boxes,
the coupling variables by diamonds
and the disciplines (equivalent to functions in this graph) by circles.
The disciplines $h_1$ and $h_2$ are strongly coupled, 
as indicated by the thickness of the line,
$L$ is the discipline defined by the link function
and $f$ is the original 3-dimensional Rosenbrock function.

\begin{figure}[!h]
    \centering
    \includegraphics[width=\linewidth]{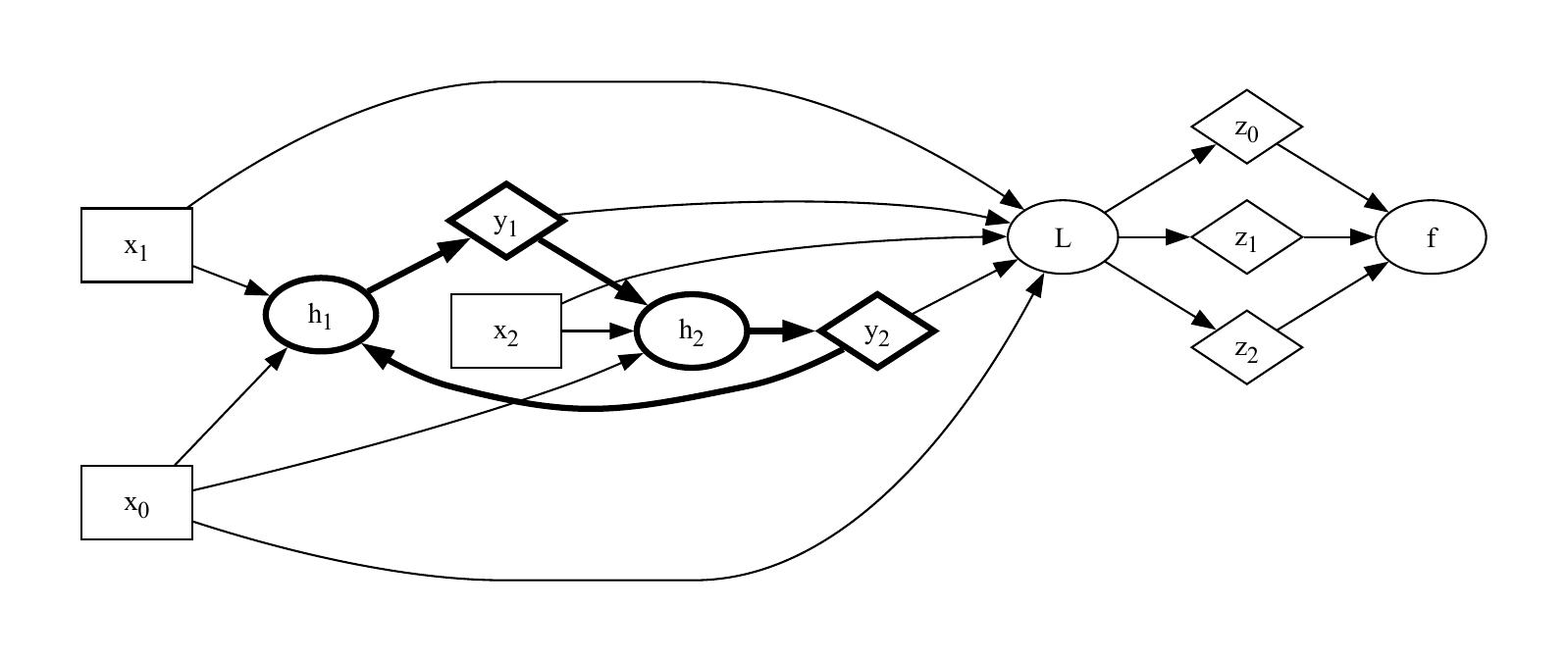}
    \caption{Coupling graph of the 3-dimensional Rosenbrock MDO problem.}
    \label{fig:coupling_graph}
\end{figure}

In practice, the Jacobi or Gauss-Seidel technique approximates $\by(\bx)$ by $\hat{\by}(\bx)$ and consequently, shifts the objective by $\tilde{f}(\bx, \hat{\by}(\bx))-\tilde{f}(\bx, \by(\bx))$:
\begin{equation}\label{eq:rosenbrockmdfapprox}
\min_{\bx\in[-2,2]^3}~\tilde{f}(\bx, \hat{\by}(\bx)).
\end{equation}
The worse the coupling problem is solved, the further the MDO problem deviates from the reference optimization problem.
Figure \ref{fig:convergence} shows the convergence of the SLSQP optimizer from the starting point $\bx^{(0)}=(0.29,0.95,0.97)$ to the theoretical solution, when using the Jacobi algorithm.
The optimizer stopped after 30 iterations, 
with a plateau reached around the ninth iteration for the objective and twenty-fifth for the design variables. 
We can see the gain in precision of the latest iterations, 
of nearly a decade per iteration.

\begin{figure*}[!h]
    \centering
    \includegraphics[width=0.49\linewidth]{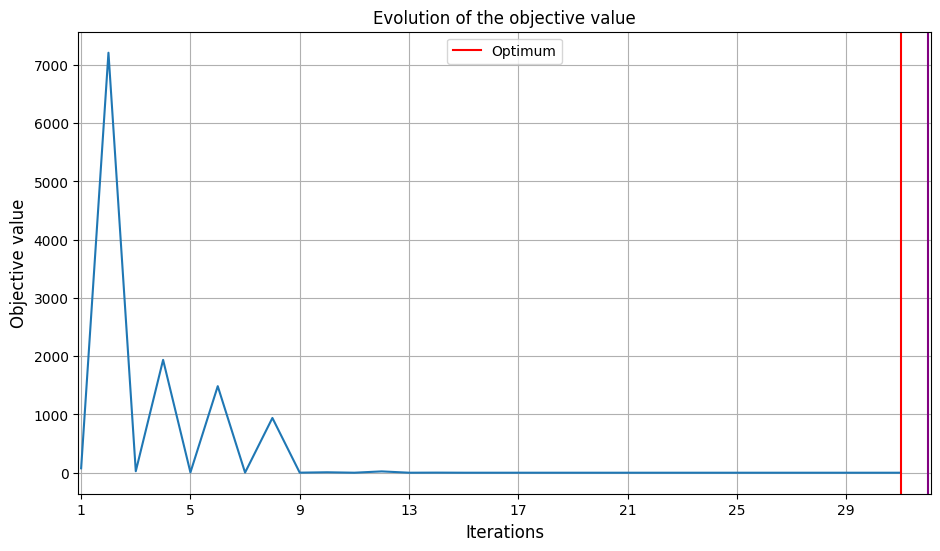}
    \includegraphics[width=0.44\linewidth]{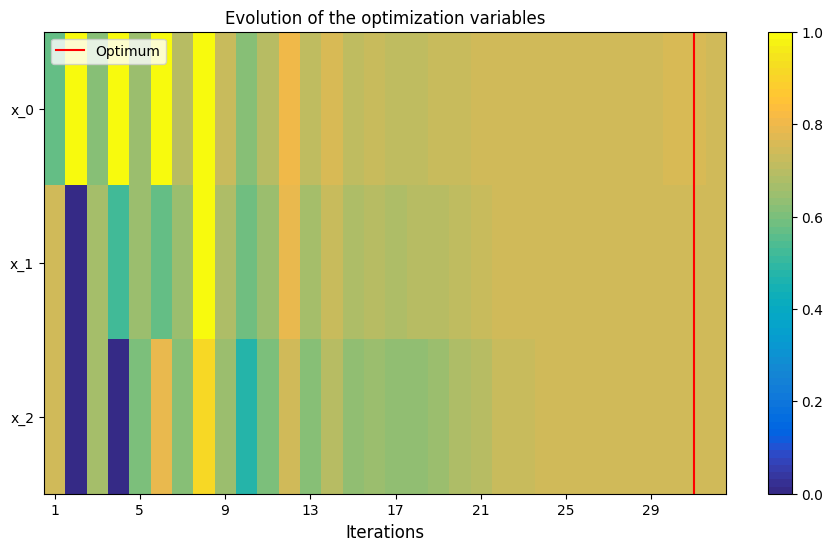}
    \includegraphics[width=0.49\linewidth]{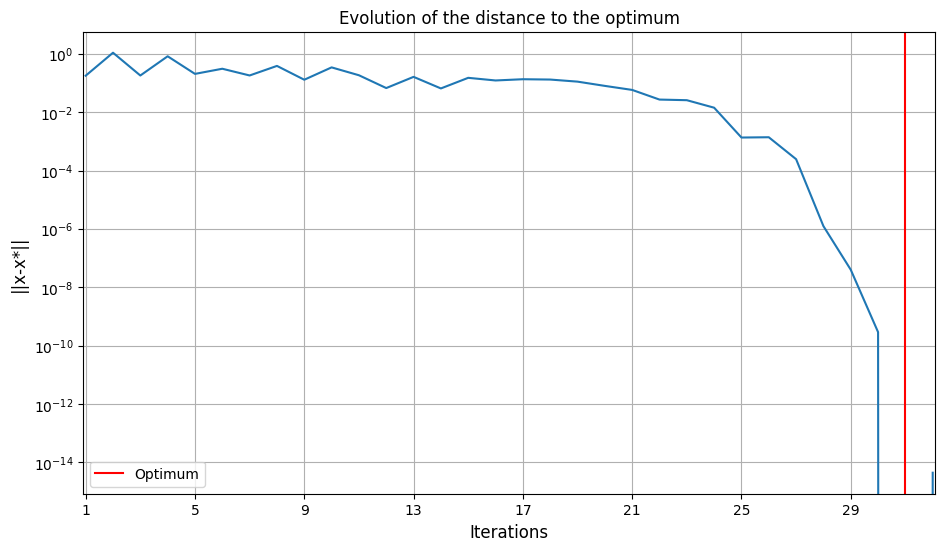}
    \caption{Convergence of the Jacobi algorithm from the starting point $\bx^{(0)}=(0.29,0.95,0.97)$, in terms of objective, optimization variables and logarithm of the Euclidean distance $\Delta_{\bx}$ to the optimum $\bx^*$.}
    \label{fig:convergence}
\end{figure*}

The next section will repeat this experiment for both Gauss-Seidel and Jacobi algorithms from different starting points $\bx^{(0)}$,
in order to see whether the parallel structure of Jacobi is an advantage over Gauss-Seidel or whether the numerical stability of Gauss-Seidel makes its superiority.
Before that,
this section concludes by Figure \ref{fig:xdsm} comparing the XDSM (eXtended Design Structure Matrix) \cite{xdsm} of the MDF formulation using the Gauss-Seidel algorithm (Figure \ref{fig:xdsmgs}) and the XDSM of the MDF formulation using the Jacobi one (Figure \ref{fig:xdsmj}).

\begin{figure*}[!h]
    \centering
     \begin{subfigure}[b]{0.75\textwidth}
        \includegraphics[width=\linewidth]{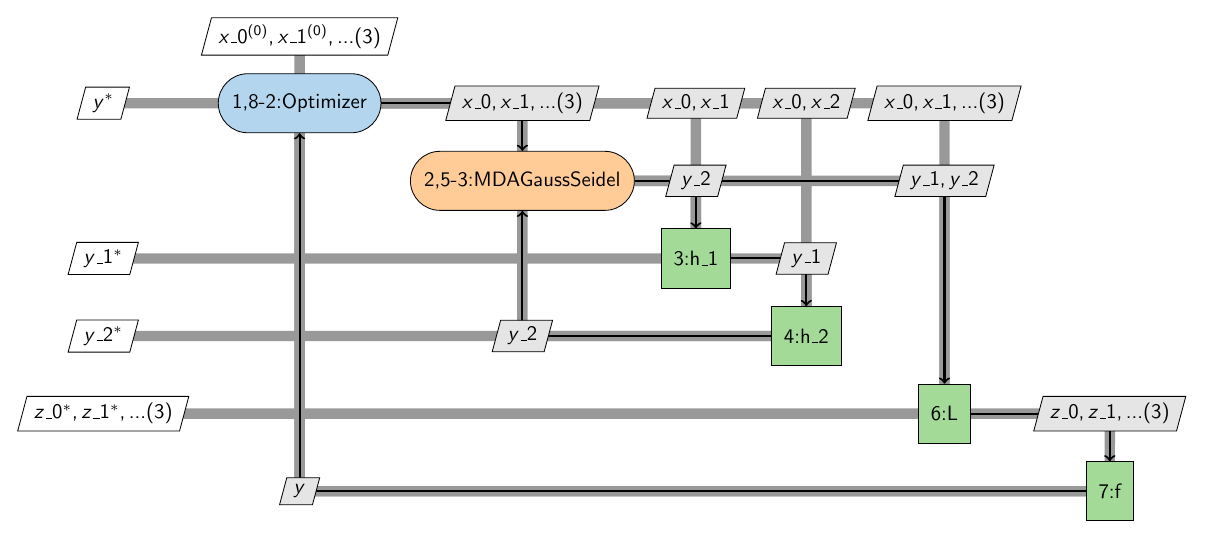}
        \caption{Gauss-Seidel algorithm.}
        \label{fig:xdsmgs}
    \end{subfigure}
     \begin{subfigure}[b]{0.75\textwidth}
        \includegraphics[width=\linewidth]{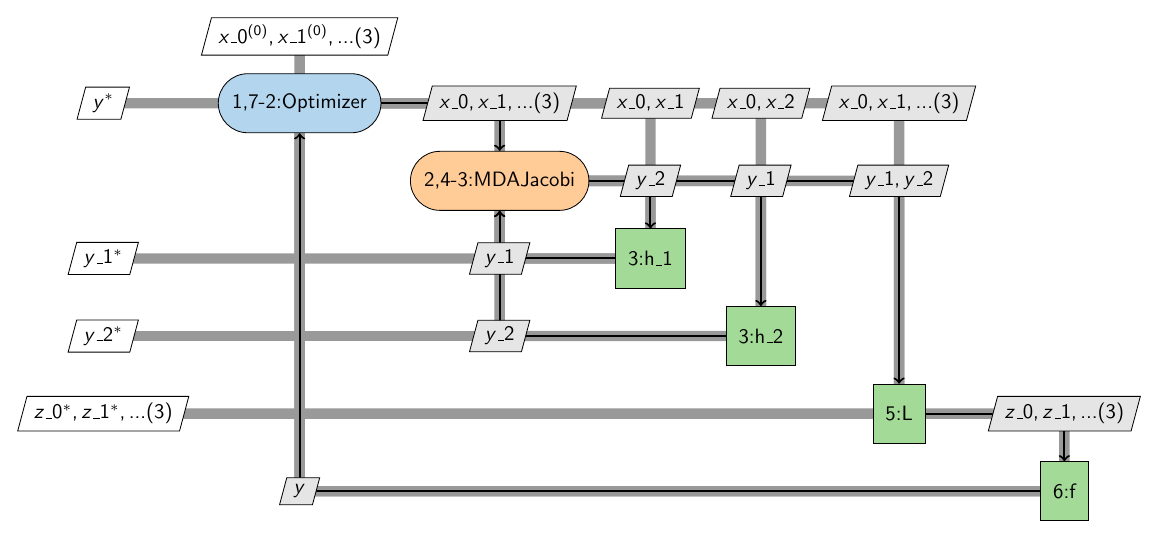}
        \caption{Jacobi algorithm.}
        \label{fig:xdsmj}
    \end{subfigure}
    \caption{Comparison of the MDF formulation using the Gauss-Seidel algorithm and the XDSM of the MDF formulation using the Jacobi one in terms of XDSM.}    
    \label{fig:xdsm}
\end{figure*}

First, we can see that both XDSMs have an orange block representing the algorithm managing the strong coupling between $h_1$ and $h_2$.
From the view of the process represented by the black line,
the optimizer provides a design vector $\bx^{(k)}$ to the coupling algorithm at the $k$-th iteration,
then the coupling algorithm is executed and provides a design vector $\by^{(k)}$ to the link discipline $L$,
which in turn provides its output value to the Rosenbrock function $f$.
The difference is in the coupling stage
where the disciplines $h_1$ and $h_2$ are executed sequentially in the case of the Gauss-Seidel algorithm
and in parallel in the case of the Jacobi algorithm.

\subsection{Benchmark problem 1 - Two disciplines with scalar design variables}\label{subsec:problem_1}

Problem \eqref{eq:rosenbrockmdo} is the first benchmark problem
and Table \ref{tab:results_pb1_1} and \ref{tab:results_pb1_2} summarize the results.
Table \ref{tab:results_pb1_1} displays the performance metrics of the Jacobi- and Gauss-Seidel-based optimization processes
in terms of mean and standard deviation estimated from 100 different values of the starting point $\bx^{(0)}$ 
constituting a design of experiments generated by LHS
and optimized by simulated annealing.

\begin{table}[!h]
    \centering
    \resizebox{\linewidth}{!}{
    \begin{tabular}{c|c|c|c|c|c|c|c|c|c|c}
          & $\Delta_{\bx}$ & $\Delta_f$ & $n_f$ & $n_f'$ & $n_L$ & $n_L'$ & $n_{h_1}$ & $n_{h_1}'$ & $n_{h_2}$ & $n_{h_2}'$ \\ 
         \hline 
         \multirow{2}{*}{J} & 0.0 & 0.0 & 48.7 & 48.0 & 48.7 & 48.0 & 490.8 & 48.0 & 490.8 & 48.0 \\ 
         & (0.0) & (0.0) & (12.4) & (12.5) & (12.4) & (12.5) & (129.8) & (12.5) & (129.8) & (12.5) \\ 
         \hline 
         \multirow{2}{*}{GS} & 0.0 & 0.0 & 48.6 & 47.8 & 48.6 & 47.8 & 347.8 & 47.8 & 300.0 & 47.8 \\ 
         & (0.0) & (0.0) & (12.3) & (12.4) & (12.3) & (12.4) & (93.9) & (12.4) & (81.9) & (12.4) \\   
    \end{tabular}
    }
    \caption{Performance of the Jacobi and Gauss-Seidel algorithms in the case of Problem 1.}
    \label{tab:results_pb1_1}
\end{table}

Firstly,
both optimization processes have converged to the theoretical solution $\bx^*$
as the estimation errors $\Delta_{\bx}$ and $\Delta_f$ are zero, whatever the initial value of $\bx$.

Secondly,
the number of evaluations of the functions $f$ and $L$ is essentially identical, 
both in terms of mean and standard deviation.
This is because the coupling algorithms have converged sufficiently finely, 
with a normalized residual norm\footnote{
Jacobi and Gauss-Seidel are iterative algorithms updating the value of $\by$ sequentially.
The first iteration updates the initial value $\by^{(0)}$ to $\by^{(1)}$
and the $i$-th iteration updates $\by^{(i-1)}$ to $\by^{(i)}$.
The normalized residual norm at this iteration is $\frac{\|\by^{(i)}-\by^{(0)}\|_2}{\|\by^{(0)}\|_2}$.
} 
lower than $10^{-6}$
for any point $\bx$ given by the optimization algorithm.
In this case,
$h(\bx,\by)\approx \by$ is verified,
$L(\bx,\by)\approx \bx$ is verified in turn
and therefore,
solving Problem \eqref{eq:rosenbrock} and Problem \eqref{eq:rosenbrockmdo} is equivalent
as stated in Proposition \ref{prop:main_result}.
A difference could arise if the Jacobi and Gauss-Seidel algorithms did not converge at certain iterations,
which would break this equivalence and open the door to different optimization paths.

Thirdly,
the number of evaluations of the derivatives of $f$, $L$, $h_1$ and $h_2$ is essentially identical for the same reason
and the fact that Jacobi and Gauss-Seidel algorithms do not use the derivatives of $h_1$ and $h_2$.

Lastly,
the focus is on the number of evaluations of $h_1$ and $h_2$
to find the most efficient coupling algorithm for this particular MDO problem.
Table \ref{tab:results_pb1_1} shows that 
this number is up to 64\% higher when using the Jacobi algorithm,
which means that the Gauss-Seidel algorithm is more suitable in terms of computational resources while ensuring high accuracy.
However,
in the event that a distributed computing infrastructure is available, 
the Jacobi algorithm would benefit from its parallelized structure
and the conclusions would be all the more reversed
that the number of processes would be important.
More precisely,
the number of evaluations of $h_1$, $h_2$ and their Jacobian functions $\nabla_{\bx} h_1$ and $\nabla_{\bx} h_2$
would be divided by the number of processes in the case of the Jacobi algorithm.

Based on these results,
Gauss-Seidel is better than Jacobi for this type of problem if calculations cannot be parallelized, and vice versa.
However, 
the number of evaluations of $h_1$ and $h_2$ appears to be significant on both sides
and it might be appropriate to reduce it using an acceleration method.
For example,
Table \ref{tab:results_pb1_2} compares these coupling algorithms using a polynomial extrapolation method \cite{acceleration}.
The statistics are very similar,
except for the number of evaluations $n_{h_1}$ and $n_{h_2}$ which have been reduced by 63\% and 64\% respectively in the case of Jacobi
and by 40\% and 37\% respectively in the case of Gauss-Seidel.
As for the number of evaluations of $h_1$ (resp. $h_2$),
it is 16\% (resp. 10\%) higher in the case of Gauss-Seidel than in the case of Jacobi,
which leads to the conclusion that Gauss-Seidel is better for this type of problem, with or without parallelization of calculations.

Finally,
this decrease in the number of evaluations leads us to adopt the minimum polynomial acceleration method for the following benchmark problems.

\begin{table}[!h]
    \centering
    \resizebox{\linewidth}{!}{
    \begin{tabular}{c|c|c|c|c|c|c|c|c|c|c}
          & $\Delta_{\bx}$ & $\Delta_f$ & $n_f$ & $n_f'$ & $n_L$ & $n_L'$ & $n_{h_1}$ & $n_{h_1}'$ & $n_{h_2}$ & $n_{h_2}'$ \\ 
         \hline 
         \multirow{2}{*}{J} & 0.0 & 0.0 & 48.2 & 47.5 & 48.2 & 47.5 & 180.9 & 47.5 & 175.7 & 47.5 \\ 
         & (0.0) & (0.0) & (11.9) & (12.0) & (11.9) & (12.0) & (42.1) & (12.0) & (41.7) & (12.0) \\ 
         \hline 
         \multirow{2}{*}{GS} & 0.0 & 0.0 & 48.4 & 47.6 & 48.4 & 47.6 & 209.5 & 47.6 & 193.5 & 47.6 \\ 
         & (0.0) & (0.0) & (12.1) & (12.1) & (12.1) & (12.1) & (51.1) & (12.1) & (48.2) & (12.1) \\         
    \end{tabular}
    }
    \caption{Performance of the Jacobi and Gauss-Seidel algorithms in the case of Problem 1, when using an acceleration method.}
    \label{tab:results_pb1_2}
\end{table}

\subsection{Benchmark problem 2 - Six disciplines with scalar design variables}\label{subsec:problem_2}

This second benchmark problem aims to show that the methodology is not limited to two strongly coupled disciplines.
For this purpose,
we will compare the Jacobi and Gauss-Seidel algorithms in dimension seven, 
using Problem \eqref{eq:rosenbrock} with $N=7$.
The resulting number of strongly coupled disciplines is six.
It is important to remind that in this case, 
the function has a local minimum at point $\tilde{\bx}=(-1,1,1,1,1,1,1)$
in addition to its global minimum at point $\bx^*=(1,1,1,1,1,1,1)$ \cite{Shang2006}.
Their values are $f(\tilde{\bx})=4$ and $f(\bx^*)=0$.
To avoid converging towards the local optimum,
the local optimization algorithm SLSQP is applied from 10 starting points $\bx^{(0)}$ constituting a design of experiments generated by LHS and the best solution is selected.
The execution of this multi-start SLSQP algorithm is repeated 100 times
using different seeds for the pseudo-random number generator of the LHS technique.

Table \ref{tab:results_pb2} shows the results.
First, we can see that the number of iterations of the optimization algorithm is the same for both formulations, namely 397.
Secondly,
the metric $\Delta_{\bx}$ and $\Delta_f$ shows a convergences towards the global minimum for both Jacobi and Gauss-Seidel.
Thirdly, 
the average number of evaluations of the strongly coupled disciplines, defined $\frac{1}{N}\sum_{i=1}^Nn_{h_i}$, in the case of Jacobi is higher than in the case of Gauss-Seidel by 56\%.
In the absence of parallelization, 
Gauss-Seidel is therefore cheaper than Jacobi.
Conversely, parallelization makes Jacobi more appropriate.
Finally, 
it should be noted that 
the number of evaluations is much higher than in the previous benchmark problem
due to the multi-start algorithm;
less greedy global algorithms might be more suitable.

\begin{table}[!h]
    \centering
    \begin{tabular}{c|c|c|c|c|c|c|c|c}
          & $\Delta_{\bx}$ & $\Delta_f$ & $n_f$ & $n_f'$ & $n_L$ & $n_L'$ & $\bar{n}_h$ & $\bar{n}'_h$ \\ 
         \hline 
         \multirow{2}{*}{J} & 0.0 & 0.0 & 397.2 & 352.5 & 397.2 & 352.5 & 3060.1 & 352.5 \\ 
         & (0.0) & (0.0) & (27.3) & (24.6) & (27.3) & (24.6) & (208.9) & (24.6) \\ 
         \hline 
         \multirow{2}{*}{GS} & 0.0 & 0.0 & 397.7 & 356.7 & 397.7 & 356.7 & 1956.8 & 356.7 \\ 
         & (0.0) & (0.0) & (27.2) & (25.6) & (27.2) & (25.6) & (134.6) & (25.6)\\ 
    \end{tabular}
    \caption{Performance of the Jacobi and Gauss-Seidel algorithms in the case of Problem 2.}
    \label{tab:results_pb2}
\end{table}

\subsection{Benchmark problem 3 - Two disciplines with vectorial design variables}\label{subsec:problem_3}

This third benchmark problem shows that the methodology applies to vectorial design variables,
using a multi-start SLSQP algorithm to deal with the local minimum.
For this purpose, 
we consider Problem \eqref{eq:rosenbrock} with $N=7$
where the design vector $\bx$ is decomposed as $\bx=(\bx_0,\bx_1,\bx_2)$ with $\bx_0\in[-2,2]^2$, $\bx_1\in[-2,2]^3$ and $\bx_2\in[-2,2]^2$.
Thus,
the corresponding MDO problem includes two strongly coupled disciplines $h_1$ and $h_2$.
Table \ref{tab:results_pb3} shows that the formulations are slightly equivalent in terms of number of iterations
and Gauss-Seidel requires 10\% more evaluations per iteration.

\begin{table}[!h]
    \centering
    \begin{tabular}{c|c|c|c|c|c|c|c|c|c|c}
          & $\Delta_{\bx}$ & $\Delta_f$ & $n_f$ & $n_f'$ & $n_L$ & $n_L'$ & $\bar{n}_h$ & $\bar{n}'_h$ \\ 
         \hline 
         \multirow{2}{*}{J} & 0.0 & 0.0 & 396.2 & 363.1 & 396.2 & 363.1 & 1754.0 & 363.1 \\ 
         & (0.0) & (0.0) & (27.1) & (26.0) & (27.1) & (26.0) & (122.1) & (26.0) \\ 
         \hline
         \multirow{2}{*}{GS} & 0.0 & 0.0 & 383.7 & 349.4 & 383.7 & 349.4 & 1871.5 & 349.4 \\ 
         & (0.0) & (0.0) & (25.1) & (24.1) & (25.1) & (24.1) & (124.4) & (24.1)\\ 
    \end{tabular}
    \caption{Performance of the Jacobi (J) and Gauss-Seidel (GS) algorithms in the case of Benchmark problem 3.}
    \label{tab:results_pb3}
\end{table}

\subsection{Benchmark problem 4 - Two disciplines with non-linear couplings}\label{subsec:problem_4}

This last benchmark problem shows that the methodology supports non-linear couplings.
For that purpose, it uses the coupling function defined in Equations \eqref{eq:nl1} and \eqref{eq:nl2} with the logistic functions $\sigma_{i,j}(x)=1/(1+e^{-0.3x})$.
Table \ref{tab:results_pb4} shows that all the optimizations have converged to the optimum $\bx^*$, as $\Delta_{\bx}$ and $\Delta f$ have a standard deviation and a mean of zero. 
The convergence rate is slower than in the linear case presented in Table \ref{tab:results_pb1_2}, with an average number of iterations of 70 instead of 48.
This means that the non-linear couplings made the task complex for MDO algorithms.

\begin{table}[!h]
    \centering
    \resizebox{\linewidth}{!}{
    \begin{tabular}{c|c|c|c|c|c|c|c|c|c|c}
          & $\Delta_{\bx}$ & $\Delta_f$ & $n_f$ & $n_f'$ & $n_L$ & $n_L'$ & $n_{h_1}$ & $n_{h_1}'$ & $n_{h_2}$ & $n_{h_2}'$ \\ 
         \hline 
         \multirow{2}{*}{J} & 0.0 & 0.0 & 70.9 & 69.5 & 70.9 & 69.5 & 362.0 & 139.1 & 362.0 & 139.1 \\ 
         & (0.0) & (0.0) & (17.6) & (17.7) & (17.6) & (17.7) & (89.2) & (35.4) & (89.2) & (35.4) \\ 
         \hline 
         \multirow{2}{*}{GS} & 0.0 & 0.0 & 70.0 & 69.2 & 70.0 & 69.2 & 428.8 & 138.4 & 428.8 & 138.4 \\ 
         & (0.0) & (0.0) & (18.2) & (18.2) & (18.2) & (18.2) & (112.8) & (36.5) & (112.8) & (36.5) \\  
    \end{tabular}
    }
    \caption{Performance of the Jacobi and Gauss-Seidel algorithms in the case of Problem 4.}
    \label{tab:results_pb4}
\end{table}

\section{Conclusion}\label{sec:conclusion}

Benchmark problems with theoretically known solutions are a cornerstone of algorithm assessment in optimization, yet they remain scarce in the multidisciplinary design optimization setting. To address this gap, we proposed a systematic methodology to transform any mono-disciplinary optimization problem into a family of parametric MDO problems sharing the same solution.

The construction relies on two ingredients provided by the user: a set of coupling equations that introduce interdisciplinary dependencies, and a link function that eliminates the coupling variables and recovers the original mono-disciplinary problem. Theoretical conditions ensuring the equivalence between the two problems were established: equivalence is guaranteed for any continuous coupling functions, possibly non-linear, when the space of coupling variables is a convex compact set.
As a consequence, any mono-disciplinary problem with a known solution gives rise to an MDO problem with the same solution.

Because the methodology is agnostic to the number of disciplines and variable dimensions, it is naturally suited for scalability studies. A specialized version for linear coupling equations with constant coefficients was also developed, for which the solution is explicit and the coefficients can be sampled independently at random. This randomization enables statistical evaluation of MDO algorithm performance through indicators such as mean and standard deviation over multiple problem instances.

We illustrated the versatility of the approach by constructing scalable MDO Rosenbrock problems in various configurations — scalar and vectorial design variables, varying numbers of disciplines — and used them to compare the Jacobi and Gauss-Seidel coupling algorithms. Beyond this illustration, the proposed framework opens a systematic route to generating MDO benchmarks of arbitrary scale and complexity from the extensive catalog of existing mono-disciplinary test problems, paving the way for more rigorous and reproducible comparisons of MDO algorithms.


\section*{Acknowledgements}
The authors would like to thank François Gallard for his comments, which helped improve the scientific quality of the manuscript through his expertise in MDO.

\bibliographystyle{plain}
\bibliography{references}

\end{document}